\DeclareMathOperator\C{\mathbb C}
\DeclareMathOperator\R{\mathbb R}
\DeclareMathOperator\Z{\mathbb Z}
\newtheorem{theorem}{Theorem}[section]
\newtheorem{lemma}[theorem]{Lemma}
\newtheorem{cor}[theorem]{Corollary}
\newtheorem{prop}[theorem]{Proposition}
\theoremstyle{definition}
\newtheorem{definition}[theorem]{Definition}
\newtheorem{example}[theorem]{Example}
\theoremstyle{remark}
\newtheorem{remark}[theorem]{Remark}
\newcommand{\dontprint}[1]\relax
\newcommand{\Ga}{\Gamma}
\newcommand{\BB}{{\mathcal B}}
\newcommand{\TT}{{\mathcal T}}
\newcommand{\MM}{{\mathcal M}}
\newcommand{\de}{\delta}
\newcommand{\sub}{\subset}
\newcommand{\ov}{\overline}
\newcommand{\om}{\omega}
\renewcommand{\a}{\alpha}
\renewcommand{\b}{\beta}
\newcommand{\coker}{\operatorname{coker}}
\newcommand{\we}{\wedge}
\newcommand{\De}{\Delta}
\newcommand{\rk}{\operatorname{rk}}
\renewcommand{\Re}{\operatorname{Re}}
\newcommand{\cT}{\mathcal{T}}
\numberwithin{equation}{section}
\title{Convergence of integrals on the moduli spaces of curves and cographical matroids}
\author{Alexander Polishchuk}
\author{Nicholas Proudfoot}
\thanks{A.P. is partially supported by the NSF grants DMS-2001224, DMS-2349388, by the Simons Travel grant MPS-TSM-00002745, 
and within the framework of the HSE University Basic Research Program. N.P. is partially supported by the NSF grants DMS-1954050, DMS-2053243, and DMS-2344861.}
\begin{document}

\begin{abstract}
We determine the convergence regions of certain local integrals on the moduli spaces of curves in neighborhoods of fixed stable curves
in terms of the combinatorics of the corresponding graphs. 
\end{abstract}

\maketitle

\section{Introduction}

Let $\ov{\MM}_g$ denote the moduli space of stable curves of genus $g$, and let $\De^{ns}\sub \ov{\MM}_g$ denote the non-separating node boundary divisor.

We consider 
a stratum $\MM_{\Ga}\sub \ov{\MM}_g$, consisting of curves of given combinatorial type.
Here $\Ga$ is a stable (multi-)graph of genus $g$ (possibly with multiple edges and loops, with marking by genus on vertices).
Let $\varphi$ be a top holomorphic form on $\ov{\MM}_g$ defined in a neighborhood in $\ov{\MM}_g$ of a point $C_0\in \MM_{\Ga}$, with a pole of order
$1$ along $\De^{ns}$. We are interested in the region of convergence in $s\in\C$ of the integral
$$I_{\Ga}(\varphi,s)=\int_{B_{\Ga}}\frac{\varphi\we\ov{\varphi}}{|\det(\tau-\ov{\tau})|^s},$$
where $B_{\Ga}$ is a sufficiently small ball in an \'etale neighborhood of $C_0$ in $\ov{\MM}_g$, and $\tau$ is the period matrix. Integrals of this type (for $s=5$) appear in calculation
of vacuum amplitudes in superstring theory after integrating out the odd variables and using the GSO projection (see \cite{Witten}).


The point is that $\det(\tau-\ov{\tau})$ has a logarithmic growth near $\De^{ns}$ that offsets the poles of $\varphi\we \ov{\varphi}$ for sufficiently large $s$.
The precise region of convergence of $I_{\Ga}(\varphi,s)$ depends on a graph $\Ga$.

For example, if $\Ga$ has a single vertex of genus $g-1$ with a loop, i.e., we are at the generic point of $\De^{ns}$, then $\det(\tau-\ov{\tau})=u\cdot \log |t|$, where
$u$ is invertible and $t=0$ is a local equation of $\De^{ns}$. So the convergence of our integral for such $\Ga$ is the same as for $\frac{dt\we d\ov{t}}{t\cdot \ov{t}\cdot (\log |t|)^s}$.
Thus, in this case the integral absolutely converges for $\Re(s)>1$ and diverges for $s=1$.

Our main result, Theorem A below, determines the convergence threshold for each $\Ga$. In particular, it shows that for each genus $\ge 6$, there exists a stable graph $\Ga$
such that $I_{\Ga}(\varphi,5)$ diverges. This means that the definition of superstring vacuum amplitudes for $g\ge 6$ requires some additional regularization procedure at the non-separating node 
boundary divisor in addition to the GSO projection.

Using the asymptotics of $\tau$ near $C_0$ (controled by the monodromy around branches of $\De^{ns}$), in the case when $C_0$ has rational components, we relate the integral $I_{\Ga}$ to another integral defined 
in terms of the graph $\Ga$.

Let $E(\Ga)$ denote the set of edges of a connected graph $\Ga$. We introduce independent variables $x_e$ associated $e\in E(\Ga)$.
Let $b=b(\Ga)$ denote the 1st Betti number of $\Ga$ (which is equal to $g$, the arithmetic genus of $C_0$),
and let $c_1,\ldots,c_b$ be a collection of simple cycles in $\Ga$ giving a basis in $H_1(\Ga)$. For each edge $e\in E(\Ga)$, we define the index
$$(c_i,c_j)_e=\begin{cases} 1, &c_i \text{ and } c_j \text{ pass through } e \text{ in the same direction},\\
-1, &c_i \text{ and } c_j \text{ pass through } e \text{ in the opposite directions},\\
0, &\text{ otherwise},\end{cases}$$
and consider the $b\times b$-matrix $A=(a_{ij})$ with
\begin{equation}\label{a-matrix-eq}
a_{ij}=\sum_{e\in E(\Ga)}(c_i,c_j)_e\cdot x_e.
\end{equation}
Let $\psi_\Ga\in \Z[x_e]_{e\in E(\Ga)}$ denote the Kirchoff polynomial (aka the first Symanzik polynomial) of $\Ga$ (see \cite[Sec.\ 2]{Bloch}),
defined as the determinant 
$$\psi_\Ga=\det(a_{ij}).$$ 
This polynomial also has an expansion
\begin{equation}\label{psi-expansion}
\psi_\Ga=\sum_T \prod_{e\not\in T} x_e,
\end{equation}
where $T$ runs over all spanning trees of $\Ga$ (see \cite[Prop.\ 3.4]{Bloch} and Prop.\ \ref{Kirch-prop} below).
Note that $\psi_\Ga$ is homogeneous of degree $b(\Ga)$.


Let $E'(\Ga)\sub E(\Ga)$ denote the set of edges which are not bridges. We consider the integral
$$J_{\Ga}(s):=\int_B \frac{\prod_{e\in E(\Ga)} dz_ed\ov{z}_e}{|\psi_\Ga(\ln |z_\bullet|)|^s\cdot \prod_{e\in E'(\Ga)} z_e\ov{z}_e},$$
where $B$ is a small ball around $0$ in $\C^{E(\Ga)}$, $(z_e)$ are complex coordinates on $\C^{E(\Ga)}$.

Given a connected graph $\Ga$ without bridges (not necessarily stable), we define a rational constant $c(\Ga)>0$ as follows.
Consider the vector space $\R^{E(\Ga)}$ with the basis corresponding to edges of $\Ga$, and let $v\in \R^{E(\Ga)}$ denote
the sum of all basis vectors. For each spanning tree $T$, consider
the vector
$$v_T:=\sum_{e\not\in T} e\in \R^{E(\Ga)}.$$
Now we set 
$$c(\Ga)=\inf \left\{\sum c_T \ \big{|} \ \sum_T c_Tv_T\geq v, c_T\in \R_{\ge 0}\right\},$$
where $w \geq v$ means that $w-v\in \R_{\ge 0}^{E(\Ga)}$.

It is easy to see that 
\begin{equation}\label{c-Gamma-ineq}
c(\Ga)\ge \frac{e(\Ga)}{b(\Ga)},
\end{equation}
where $e(\Ga)=|E(\Ga)|$ is the number of edges and $b(\Ga)$ is the 1st Betti number of $\Ga$. Indeed, let $\varphi:\R^{E(\Ga)}\to \R$
denote the map given by the sum of all coordinates. Then $\varphi(v)=e(\Ga)$, $\varphi(v_T)=b(\Ga)$, so the inequality $\sum_T c_Tv_T\geq v$
implies $b(\Ga)\cdot (\sum_T c_T)\ge e(\Ga)$.

For an arbitrary connected $\Ga$ (possibly with bridges), we set $c(\Ga):=c(\Ga')$, where $\Ga'$ is obtained from $\Ga$ by contracting all the bridges.

\bigskip

\noindent
{\bf Theorem A}. {\it The integral
 $J_{\Ga}(s)$ converges for $\Re(s)>c(\Ga)$ (for a sufficiently small ball around $0$) and diverges for $s=c(\Ga)$ (and any ball around $0$). 
 If $\Ga$ is stable and all components of $C_0$ are rational, the same assertions hold for the integral
$I_{\Ga}(\varphi,s)$.}
 
\bigskip

A simple example is the $n$-gon graph $\Ga=P_n$. It is easy to see that one has $c(P_n)=n$. Indeed, the vectors $v_T$ are just the basis vectors $e_i$
and the condition $\sum c_ie_i\ge v$ means that $c_i\ge 1$, so the minimal $\sum c_i$ is $n$. 
There are similar stable graphs with all vertices of genus $0$. For example, if $\Ga$ is a $2n$-gon with every other side doubled then $c(\Ga)=n$. Note that the genus of this graph is $n+1$.
This shows that the boundary $\Re(s)=c$ of the convergence halfplane for $I_{\Ga}(\varphi,s)$ can have arbitrary large $c$ as genus grows.


The proof of Theorem A uses some combinatorics of the {\it cographical matroid} associated to $\Ga$ to reduce to the case of graphs $\Ga$ for which inequality \eqref{c-Gamma-ineq}
becomes an equality (recall that the bases of the cographical matroid are complements to spanning trees, see \cite[Sec.\ 2.3]{Ox}). The key combinatorial result needed for Theorem A is that starting from any graph
$\Ga$, one can contract some edges in $\Ga$ so that the resulting graph $\ov{\Ga}$ satisfies $c(\ov{\Ga})=e(\ov{\Ga})/b(\ov{\Ga})$ (see Cor.\ \ref{main-cor}).

Note that the integral $J_{\Ga}(s)$ is similar to the Euler-Mellin integrals considered in \cite{BFP} (but with a different integration domain) and our convergence result is similar to  
\cite[Thm.\ 2.2]{BFP} (see Remark \ref{BFP-rem}).

\bigskip

{\it Convention}. By a graph we mean a connected undirected multigraph, possibly with multiple edges and loops.

\medskip

{\it Acknowledgment}. We thank Erik Panzer for useful discussions.

\section{Kirchoff polynomial and the asymptotics of the period matrix}

\subsection{Kirchoff polynomial}

We are going to give a proof of the determinant identity relating two definitions of $\psi_\Ga$,
using Cauchy-Binet theorem, and also get a bit more information about the corresponding matrix. There is also a simple recursive proof of this determinant formula
in \cite{Bloch}.

Let us consider a more general setup where we are given a surjective morphism of free $\Z$-modules of finite rank
$$\a:\Z^E\to H,$$
with the property that for every subset $S\sub E$, the cokernel $\coker(\a_S)$ is a free $\Z$-module, where $\a_S:\Z^S\to H$ is the restriction of $A$.
\footnote{Such a morphism is known as a unimodular collection of vectors in $H$.}

We will apply it in the case when $E=E(\Ga)$, $H=H^1(\Ga)$, and $\a$ is the natural projection. Note that in this case $\coker(\a_S)$ is canonically identified with
$H^1(\Ga^S)$, where $\Ga^S$ is obtained from $\Ga$ by deleting edges in $S$.

With $\a$ as above we associate a symmetric bilinear form on $H^*$ with entries which are linear forms in independent variables $(x_e)_{e\in E}$:
$$B(\xi_1,\xi_2)=\sum_{e\in E} \xi_1(\a(e))\cdot \xi_2(\a(e))\cdot x_e.$$
Thus, we have a well defined discriminant $\det(B)\in R:=\Z[x_e \ |\ e\in E]$.

Let us say that $S\sub E$ is a {\it basis} if $\a_S:\Z^S\to H$ is an isomorphism. We denote by $\BB$ the set of all bases.


\begin{prop}\label{Kirch-prop}
(i) One has $\det(B)=\sum_{S\in \BB} \prod_{e\in S} x_e$.

\noindent
(ii) Let us view $B$ as a nondegenerate form over the field $QR$, the fraction field over $R$, and let $B^{-1}$ denote the corresponding bilinear form on the dual space
$H^\vee$ with values in $QR$. Then setting $x_e=\ln(y_e)$, where $y_e>0$, we have
$$\lim_{y\to 0} B^{-1}(\ln(y_e))=0.$$ 
\end{prop}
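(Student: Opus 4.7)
The plan is to reduce both parts to matrix computations with $B = ADA^T$, where, after choosing a basis of $H$ and the dual basis of $H^*$, the matrix $A$ is $h \times |E|$ with $a_{ie}$ equal to the $i$-th coordinate of $\a(e)$, and $D$ is the diagonal matrix with entries $x_e$. For (i), I would apply the Cauchy--Binet formula to obtain
$$\det B = \sum_{S \sub E,\ |S|=h} \det(A|_S)^2 \prod_{e\in S} x_e.$$
The unimodularity hypothesis is precisely what identifies the coefficients: for $|S|=h$, the map $\a_S$ either has nontrivial kernel (hence $\det(A|_S)=0$) or is injective; in the latter case its cokernel is torsion, and the freeness of $\coker(\a_S)$ then forces the cokernel to vanish, so $\a_S$ is an isomorphism and $|\det(A|_S)|=1$. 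This is exactly the case $S\in\BB$, yielding the stated expansion.

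For (ii), I would write $B^{-1}=\adj(B)/\det(B)$ and apply Cauchy--Binet to each $(h-1)\times(h-1)$ cofactor. Deleting the $i$-th row and $j$-th column of $B$ produces $A^{(i)} D (A^{(j)})^T$, where the superscript denotes deletion of the corresponding row, so
$$(B^{-1})_{ji}=\frac{(-1)^{i+j}}{\psi_\Ga(x)}\sum_{|S|=h-1}\det(A^{(i)}|_S)\,\det(A^{(j)}|_S)\,\prod_{e\in S} x_e.$$
The key observation is that whenever $\det(A^{(i)}|_S)\ne 0$, the columns of $A|_S$ are linearly independent, so $S$ is independent in the matroid, and therefore extends to some basis $S\cup\{e^*(S)\}\in\BB$. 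Since $\psi_\Ga(|x|)=\sum_{T\in\BB}\prod_{e\in T}|x_e|$ is a sum of nonnegative terms containing $\prod_{e\in S\cup\{e^*(S)\}}|x_e|$, one obtains
$$\frac{\prod_{e\in S}|x_e|}{\psi_\Ga(|x|)}\leq\frac{1}{|x_{e^*(S)}|}.$$
Combining this with the uniform integer bound $|\det(A^{(i)}|_S)|\leq M$ (valid since there are only finitely many $i$ and $S$) and substituting $x_e=\ln y_e$, we get $|(B^{-1})_{ji}|\leq C/\min_e|\ln y_e|$ for some constant $C$. As $y\to 0$ componentwise, $\min_e|\ln y_e|\to\infty$, so $B^{-1}\to 0$.

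The main obstacle I anticipate is the careful handling of the freeness hypothesis in (i)---verifying cleanly that $|\det(A|_S)|\in\{0,1\}$ from the abstractly stated cokernel-free condition rather than some more directly checkable total unimodularity of $A$. Once (i) is in place, (ii) becomes a degree/support comparison between the cofactor expansion of $\adj(B)$ and the full expansion of $\det(B)=\psi_\Ga$, controlled by the matroid exchange property for independent sets.
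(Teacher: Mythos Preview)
Your proposal is correct and follows essentially the same route as the paper's proof: both parts use the factorization $B=ADA^T$ together with Cauchy--Binet, the unimodularity hypothesis to pin down $\det(A|_S)\in\{0,\pm1\}$ for part (i), and for part (ii) the observation that each size-$(h-1)$ set $S$ contributing to a cofactor of $B$ is independent and hence extends to a basis, so that the corresponding monomial is dominated by a term of $\det(B)$. The only cosmetic difference is that you write out the explicit bound $|(B^{-1})_{ji}|\le C/\min_e|\ln y_e|$, whereas the paper stops at the monomial comparison; also note that the statement is formulated for a general unimodular $\a:\Z^E\to H$, so strictly speaking you should write $\det(B)$ rather than $\psi_\Ga$ in part (ii).
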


\begin{proof}
(i) Let $f_1,\ldots,f_h$ be a $\Z$-basis of $H$, so we can view $\a$ as a matrix $E\times h$-matrix. Then the matrix of $B$ is given by
$$B=\a\cdot D(x)\cdot \a^t,$$
where $D(x)$ is the diagonal $E\times E$-matrix with the entries $(x_e)$.
We can calculate $\det(B)$ by applying the Cauchy-Binet's theorem to the decomposition of $B$ in $\a$ and $D(x)\a^t$.
In this theorem we need to sum over subsets $S\sub E$, where $|S|=h$, such that the corresponding minor of $\a$ is nonzero.
This condition is equivalent to the condition that $\a_S:\Z^S\to H$ has nonzero determinant. Since $\coker(\a_S)$ is free by assumption,
this is equivalent to $S$ being a basis, in which case the corresponding minor is $\pm$. Since the corresponding minor of $\a^t$ is the same sign
times $\prod_{e\in S} x_e$, the assertion follows.

\noindent
(ii) In the dual basis $(f_i^*)$ the matrix of $B^{-1}$ is the inverse matrix of $B$. Hence, it is enough to prove that every $(h-1)\times (h-1)$-minor $M$ of $B$
satisfies
$$\lim_{y\to 0} \frac{M(\ln(y))}{\det B(\ln(y))}=0.$$
Due to the formula for the determinant, it is enough to prove that every monomial appearing in $M$ is a constant multiple of $\prod_{e\in S} x_e$, for some
$S\sub S'$ with $|S|=h-1$ and $S'$ a basis. Indeed, without loss of generality we can assume that $M$ corresponds to the rows $1,\ldots,h-1$.
Then by the Cauchy-Binet's formula, the monomials appearing in $M$ would correspond to some subsets $S\sub E$ with $|S|=h-1$ such that
the map $\Z^S\to H/\Z\cdot f_h$ induced by $\a_S$ is nondegenerate. But this implies that $\a_S$ is injective, and so the image of $\a_S$ is of rank $h-1$.
Since $\a$ is surjective, there exists an element $s\in E\setminus S$ such that $\a(s)$ is not contained in the image of $\a_S$, hence for $S'=S\cup\{s\}$,
the map $\a_{S'}$ is nongenerate, i.e., $S'$ is a basis.
\end{proof}

We are interested in the case of the natural projection $\a:\Z^{E(\Ga)}\to H^1(\Ga)$ for a graph $\Ga$. In this case, $\BB$ is the set of bases of the cographical matroid
associated with $\Gamma$.
Choosing a basis of simple cycles $(c_i)$,
we can view the rows of $\a$ as coefficients of the edges in $c_i$ (with respect to a fixed orientation of all edges). Then the matrix of the symmetric form $B$ will be exactly
the matrix \eqref{a-matrix-eq}. Hence, $\det(B)$ gets identified with the polynomial $\psi_{\Ga}$ and we derive the expansion \eqref{psi-expansion}
Note that in this case bases are exactly complements
to spanning trees. In particular, if an edge $e$ is a bridge then it is not contained in any bases, so $\psi_\Ga$ depends only on variables corresponding to 
 {\it non-separating edges} (i.e., edges that are not bridges).

\subsection{Asymptotics of the period matrix}

Assume that $\Ga$ is a stable graph of genus $g$, with all vertices of genus $0$, and let $C_0$ be a stable curve with rational components with the dual graph $\Ga$.
It is well known that the arithmetic genus of $C_0$ is $g$, so we can view $C_0$ as a point of $\ov{\MM}_g$. Then the set of edges $E(\Ga)$ is in bijection
with the branches of the boundary divisor through $C_0$, and the subset $E^{ns}(\Ga)\sub E(\Ga)$ of non-separating edges is
in bijection with the branches of $\De^{ns}$ through $C_0$.

Note that if the graph $\Ga$ is trivalent then the corresponding stratum $\MM_{\Ga}$ is a point and $e(\Ga)=3g-3$. Otherwise, the stratum $\MM_{\Ga}$ has positive dimension.

For every non-separating edge $e$, we have the corresponding vanishing cycle $\a_e\in H_1(C)$, where $C$ is a smooth curve close to $C_0$,
and the corresponding monodromy transformation $M_e$ on $H_1(C)$ has form 
$$M_e(x)=(x\cdot \a_e)\a_e+x.$$
More precisely, $C$ is obtained from the collection of spheres numbered by the set $V(\Ga)$ of vertices of $\Ga$ by connecting them with tubes numbered by the set
of edges $E(\Ga)$. We fix an orientation of $\Ga$ and let $\b_e$ denote a path along the tube corresponding to $e$ going in the direction of $e$.
Then we define $\a_e$ as the class of a circle around the tube corresponding to $e$, 
so that $(\b_e\cdot\a_e)=1$.


The subgroup $A\sub H_1(C)$ generated by $(\a_e)$ is maximal isotropic, and we have natural identifications
$$H_1(C)/A\simeq H_1(C_0)\simeq H_1(\Ga)$$
(see \cite[Sec.\ 6]{Bloch}).

Let $(\a_1,\ldots,\a_g,\b_1,\ldots,\b_g)$ be a standard symplectic basis of $H_1(C)$ (so $(\b_j\cdot\a_i)=\de_{ij}$) such that $A=\Z\a_1+\ldots+\Z\a_g$.
In particular, $M_e$ does not change $\a_i$'s. Let $\om_1,\ldots,\om_g$ be the basis of $H^0(C,\om_C)$, normalized by $\int_{\a_i}\om_j=\de_{ij}$.
Then $M_e$ preserves $(\om_j)$, and changes the periods $\tau_{ij}=\int_{\b_i}\om_j$ to
$$M_e:\tau_{ij}\mapsto \int_{M_e(\b_i)}\om_j=\tau_{ij}+(\b_i\cdot \a_e)\cdot (\a_e)_j=\tau_{ij}+(\b_i\cdot \a_e)\cdot (\b_j\cdot \a_e),$$
where the integers $(\a_e)_j$ are determined from $\a_e=\sum_j (\a_e)_j\cdot \a_j$.

Let $z_e$ denote a local equation of the branch of the boundary divisor near $C_0$ corresponding to the edge $e$. Then the monodromy $M_e$
acts on $\ln(z_e)/(2\pi i)$ as $\ln(z_e)/(2\pi i)\mapsto \ln(z_e)/(2\pi i)+1$.
Hence,
$$\tau'_{ij}:=\tau_{ij}-\sum_{e\in E}\frac{\ln(z_e)}{2\pi i}\cdot (\b_i\cdot \a_e)\cdot (\b_j\cdot \a_e)$$
are invariant under all monodromy transformations $M_e$. In other words, we have
\begin{equation}\label{tau-tau'-eq}
\tau=\tau_0(\frac{\ln(z)}{2\pi i})+\tau',
\end{equation}
where $\tau=(\tau_{ij})$, $\tau'=(\tau'_{ij})$, and $\tau_0(x)$ is the matrix with coefficients in $\Z[x_e \ |\ e\in E]$ with the entries
$$\tau_0(x)_{ij}=\sum_{e\in E} x_e\cdot (\b_i\cdot \a_e)\cdot (\b_j\cdot \a_e).$$

In fact, it follows from the nilpotent orbit theorem (see \cite[Sec.\ 9]{Bloch}) that in Eq.\ \eqref{tau-tau'-eq} the term $\tau'$ is regular near $C_0$.
We will use this to compute the asymptotics for $\det(\tau-\ov{\tau})$ near $C_0$.

\begin{lemma}\label{asympt-lem}
Near $C_0$ one has 
$$\det(\tau-\ov{\tau})=\psi_{\Ga}(\frac{\ln(|z|)}{\pi i})\cdot (1+f),$$
where $z=(z_e)$ and $f\to 0$ as $C\to C_0$.
\end{lemma}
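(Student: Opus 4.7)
The plan is to extract the logarithmic leading term of $\tau-\bar\tau$ from \eqref{tau-tau'-eq}, identify its determinant with $\psi_\Ga$ via Proposition \ref{Kirch-prop}(i), and use Proposition \ref{Kirch-prop}(ii) to show that the bounded remainder contributes only a multiplicative factor $1+f$ with $f\to 0$.

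Since $\tau_0(x)$ has integer coefficients and is linear in $x$, complex conjugation yields $\overline{\tau_0(w)} = \tau_0(\bar w)$. Applying this to $w_e = \frac{\ln z_e}{2\pi i}$ and using $\ln z_e + \ln\bar z_e = 2\ln|z_e|$, Eq.\ \eqref{tau-tau'-eq} rearranges to
$$\tau-\bar\tau \;=\; \tau_0\!\left(\tfrac{\ln|z|}{\pi i}\right) + Q, \qquad Q := \tau'-\bar\tau',$$
where $Q$ is bounded near $C_0$ by the nilpotent orbit theorem. Next, I would identify $\tau_0$ with the matrix $A$ of \eqref{a-matrix-eq}: taking $c_i$ to be the image of $\beta_i$ under the projection $H_1(C)\twoheadrightarrow H_1(C)/A \cong H_1(\Ga)$, the fact that $\alpha_e$ is the meridian of the $e$-tube (with $(\beta_e\cdot\alpha_e)=1$) gives $(\beta_i\cdot\alpha_e) = \pm 1$ or $0$ according to whether $c_i$ traverses $e$ in the positive/negative direction or not at all, whence $(\beta_i\cdot\alpha_e)(\beta_j\cdot\alpha_e) = (c_i,c_j)_e$. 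Thus $\det\tau_0(x) = \psi_\Ga(x)$ by Proposition \ref{Kirch-prop}(i).

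With $L_e := \frac{\ln|z_e|}{\pi i}$, I would factor
$$\det(\tau-\bar\tau) \;=\; \det\tau_0(L)\cdot\det\bigl(I + \tau_0(L)^{-1}Q\bigr),$$
which is legitimate because $\psi_\Ga(L)\neq 0$ when $|z|$ is small: by \eqref{psi-expansion} every monomial of $\psi_\Ga$ has coefficient $1$, so $\psi_\Ga(\ln|z|)$ has definite sign once every non-bridge $|z_e|<1$. By linearity, $\tau_0(L)^{-1} = \pi i\cdot\tau_0(\ln|z|)^{-1}$, and Proposition \ref{Kirch-prop}(ii) applied to the unimodular collection $\Z^{E(\Ga)}\to H^1(\Ga)$ with $y_e = |z_e|$ gives that every entry of $\tau_0(\ln|z|)^{-1}$ tends to $0$ as $|z|\to 0$ (bridges are harmless, since they map to $0$ in $H^1(\Ga)$ and so do not appear in either $\tau_0$ or $\psi_\Ga$). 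Boundedness of $Q$ then forces $\det(I+\tau_0(L)^{-1}Q) = 1+f$ with $f\to 0$, and the lemma follows.

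The only substantive step is the last one: the whole proof rests on Proposition \ref{Kirch-prop}(ii), which was set up in the previous subsection precisely to control this inverse; the remaining manipulations are essentially linear-algebraic bookkeeping together with the tautological matching $\tau_0 = A$.
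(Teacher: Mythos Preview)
Your argument is correct and follows essentially the same route as the paper: split off the logarithmic part via \eqref{tau-tau'-eq}, identify $\tau_0$ with the matrix $A$ of \eqref{a-matrix-eq}, factor $\det(\tau-\bar\tau)$, and invoke Proposition~\ref{Kirch-prop}(ii) to kill the remainder. The only cosmetic difference is that the paper first \emph{chooses} the symplectic basis so that the $\beta_i$ project to prescribed simple cycles $c_i$ (making the claim $(\beta_i\cdot\alpha_e)\in\{0,\pm1\}$ literally true), whereas you project an arbitrary symplectic basis; either way $\det\tau_0=\psi_\Ga$ since this determinant is independent of the $\Z$-basis of $H_1(\Ga)$.
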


\begin{proof}
Let us choose a symplectic basis of $H_1(C)$ as follows. First, let $(c_i)_{i=1,\ldots,g}$ be a basis of simple cycles in $H_1(\Ga)$, and let $(\a_i)_{i=1,\ldots,g}$
be the dual basis of $A$ with respect to the pairing between $A$ and $H_1(C)/A\simeq H_1(\Ga)$ induced by the intersection pairing.
Let $(\b_i)_{i=1,\ldots,g}$ be a set of mutually orthogonal classes in $H_1(C)$ projecting to $(c_i)$ under the projection $H_1(C)\to H_1(\Ga)$ (such a set exists since the intersection pairing is perfect).
Then $(\a_i,\b_i)$ is a standard symplectic basis. Furthermore,
due to our definition of $\a_e$, the intersection index $(\b_i\cdot \a_e)$ is $1$ (resp., $-1$) exactly when $c_i$ passes through $e$ in the direction of the orientation (resp., in the opposite direction).

It follows that $\tau_0(x)$ coincides with the matrix $A=(a_{ij})$ given by \eqref{a-matrix-eq}.
Since $\tau_0(x)$ depends on $(x_e)$ linearly with integer coefficients, \eqref{tau-tau'-eq} gives
$$\tau-\ov{\tau}=A(\frac{\ln |z|}{\pi i})+\tau'-\ov{\tau'}=A(\frac{\ln |z|}{\pi i})\cdot \bigl(1+A(\frac{\ln |z|}{\pi i})^{-1}\cdot (\tau'-\ov{\tau'})\bigr),$$
where $\tau'-\ov{\tau'}$ is regular near $C_0$. Now by Proposition \ref{Kirch-prop}(ii), we have
$$\lim_{C\to C_0}A(\frac{\ln |z|}{2\pi i})^{-1}\cdot (\tau'-\ov{\tau'})=0,$$
and the assertion follows.
\end{proof}

\section{Convergence region}

\subsection{Elementary observations}

\begin{lemma}\label{two-int-lem}
Let $P(x_1,\ldots,x_n)$ be a (nonzero) polynomial with non-negative coefficients. Then for $s\in \C$, 
the integral
$$\int_B \frac{\prod_{i=1}^n dz_id\ov{z}_i}{|P(-\ln |z_1|,\ldots,-\ln |z_n|)|^s\cdot \prod_{i=1}^n z_i\ov{z}_i}$$
over a sufficiently small ball $B$ around the origin in $\C^n$ converges if and only if the integral
$$\int_{[C,+\infty)^n} \frac{dx_1\ldots dx_n}{P(x_1,\ldots,x_n)^s}$$
converges for sufficiently large $C$.
\end{lemma}

\begin{proof}
This follows immediately from the change of variables $z_i=e^{-x_i+ i\phi_i}$, with $x_i>C$ and $\phi_i\in (0,2\pi)$.
\end{proof}

\begin{lemma}\label{elem-lem}
Let $C>0$. The integral
$$\int_{[C,+\infty)^n}\frac{dx_1\ldots dx_n}{(x_1+\ldots+x_n)^s}$$
converges for $\Re(s)>n$ and diverges for $s=n$.
\end{lemma}

\begin{proof} Convergence for $\Re(s)>n$ follows from the inequality $(x_1+\ldots+x_n)^n\ge x_1\ldots x_n$. 

To prove the divergence for $s=n$, we will use induction on $n$. The base case $n=1$ is well known. Now let $n>1$. 
It is enough to prove the divergence of the integral
$$\int_{x_1>C_1}\ldots \int_{x_n>C_n} \frac{dx_1\ldots dx_n}{(x_1+\ldots+x_n)^n},$$
for any large $C_1,\ldots,C_n$. Performing the integration in $x_n$ we get
$$\frac{1}{n-1}\int_{x_1>C_1}\ldots \int_{x_{n-1}>C_{n-1}}\frac{dx_1\ldots dx_{n-1}}{(x_1+\ldots+x_{n-1}+C_n)^{n-1}}.$$
It remains to use the change of variables $x_1\mapsto x_1+C_n$ and use the induction assumption.
\end{proof}

For a vector $w=(w_1,\ldots,w_n)\in \Z_{\ge 0}^n$, we denote $x^w:=x_1^{w_1}\ldots x_n^{w_n}$.

\begin{lemma}\label{easy-div-lem}
(i) Let $C>0$. Suppose for a collection of vectors $v_1,\ldots,v_N\in \Z_{\ge 0}^n$ and scalars $c_1,\ldots,c_N\in \R_{\ge 0}$ one has
$$\sum c_iv_i \ge v:=(1,\ldots,1)\in \R^n.$$
Then the integral
\begin{equation}\label{v1-vN-s-int}
\int_{[C,+\infty)^n}\frac{dx_1\ldots dx_n}{(x^{v_1}+\ldots+x^{v_N})^s}
\end{equation}
converges for $\Re(s)>c_1+\ldots+c_N$.
In particular, for $\Ga$ without bridges, the integral $J_{\Ga}(s)$ converges for $s>c(\Ga)$.

\noindent
(ii) The integral $J_{\Ga}(s)$ diverges for $s=e(\Ga)/b(\Ga)$.
\end{lemma}

\begin{proof}
(i) 
The first assertion follows from the inequality
$$(x^{v_1}+\ldots+x^{v_N})^{c_1+\ldots+c_N}\ge (x^{v_1})^{c_1}\ldots (x^{v_N})^{c_N}\ge x^v=x_1\ldots x_n$$
for $x_i\ge 1$.
The assertion about $J_{\Ga}$ follows from this using Lemma \ref{two-int-lem}.

\noindent
(ii) Set $n=e(\Ga)$, $b=b(\Ga)$.
Since $\psi_{\Ga}$ has degree $b$, we have 
$$\psi_{\Ga}(x_1,\ldots,x_n)\le (x_1+\ldots+x_n)^b.$$
Now the divergence for $s=n/b$ follows immediately from Lemmas \ref{two-int-lem} and
\ref{elem-lem}.
\end{proof}

\begin{remark}\label{BFP-rem}
The convergence statement of Lemma \ref{easy-div-lem}(i) is similar to the convergence statement \cite[Thm.\ 2.2]{BFP} about more general Euler-Mellin integrals.
Note however that our domain of integration is $[C,+\infty)^n$, where $C>0$, so our condition on $s$ is different from the one in \cite[Thm.\ 2.2]{BFP} where the integration is over $(0,+\infty)^n$. 
The converse of Lemma \ref{easy-div-lem}(i) is false, i.e., the integral \eqref{v1-vN-s-int} can converge even when there exist no $c_i\in \R_{\ge 0}$ with 
$\sum c_i v_i\ge v$ and $\Re(s)>\sum c_i$. For example, consider the integral
$$I(s)=\int_{[C,+\infty)}\frac{dx_1dx_2}{(x_1x_2^2+x_1^4x_2^3)^s}.$$
Then conditions of Lemma \ref{easy-div-lem}(i) hold only for $\Re(s)>2/5$, however, we claim that $I(s)$ converges for $\Re(s)>1/3$. Indeed, changing the variables by $x_i=y_i+C$,
we get
$$I(s)=\int_{[0,+\infty)}\frac{dy_1dy_2}{P(y_1,y_2)^s},$$
where $P(y_1,y_2)=(C+y_1)(C+y_2)^2+(C+y_1)^4(C+y_2)^3$. Since the Newton polytope of $P$ is the rectangle with the vertices $(0,0)$, $(4,0)$, $(0,3)$ and $(4,3)$,
convergence for $\Re(s)>1/3$ follows from  \cite[Thm.\ 2.2]{BFP}.
\end{remark}

Recall that we have an inequality $c(\Ga)\ge e(\Ga)/b(\Ga)$ (see \eqref{c-Gamma-ineq}).

\begin{definition} We say that $\Ga$ {\it optimal} if $c(\Ga)=e(\Ga)/b(\Ga)$.
\end{definition}

Lemma \ref{easy-div-lem} proves our assertion about convergence/divergence of $J_{\Ga}(s)$ in the case of optimal $\Ga$.
Below we will reduce the case of a general $\Ga$ to that of optimal $\Ga$.

\subsection{Combinatorial statement}

Let $M$ be a loopless matroid on the ground set $E$.  For all $S\subset E$, consider the linear functional
$$\varphi_S:\R^E\to\R$$
taking an element to the sum of its coordinates in $S$.  The {\bf base polytope} $P(M)$ consists of the vectors $w\in \R^E$
such that $0\leq \varphi_S(w) \leq \rk S$ for all $S\subset E$ and $\varphi_E(w) = \rk E$.\\

Let 
$$m=m(M):= \max\{|S| / \rk S \mid S \neq \emptyset\},$$ 
and let $\cT_0$ be the (nonempty) collection of subsets of $E$ that attain this maximum.

\begin{lemma}\label{max}
If $S,T\in\cT_0$, then $S\cup T\in \cT_0$.  In other words, $\cT_0$ has a maximal element.
\end{lemma}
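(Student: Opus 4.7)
The plan is to deduce Lemma \ref{max} from the submodularity of the matroid rank function, which states
$$\rk(S\cup T)+\rk(S\cap T)\le \rk S+\rk T,$$
combined with the elementary identity $|S\cup T|+|S\cap T|=|S|+|T|$.

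Fix $S,T\in\cT_0$, so $|S|=m\cdot \rk S$ and $|T|=m\cdot \rk T$. The upper bound $|S\cup T|\le m\cdot \rk(S\cup T)$ comes for free from the definition of $m$ (note $S\cup T\ne\emptyset$). The work is to prove the matching lower bound. I would split on whether $S\cap T$ is empty. If $S\cap T=\emptyset$, then $|S\cup T|=|S|+|T|=m(\rk S+\rk T)\ge m\cdot \rk(S\cup T)$ by submodularity (with the convention $\rk\emptyset=0$). If $S\cap T\ne\emptyset$, then by the defining maximality of $m$ we have $|S\cap T|\le m\cdot\rk(S\cap T)$, and I would combine the three inequalities:
$$|S\cup T|=|S|+|T|-|S\cap T|\ge m(\rk S+\rk T)-m\cdot\rk(S\cap T)\ge m\cdot\rk(S\cup T),$$
using submodularity at the last step. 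In either case $|S\cup T|/\rk(S\cup T)\ge m$, which forces equality and shows $S\cup T\in\cT_0$.

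For the second assertion, closure of $\cT_0$ under binary unions together with the finiteness of $E$ immediately implies that the union of all members of $\cT_0$ is itself in $\cT_0$, and this is the unique maximum element.

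I do not expect any serious obstacle here: the only subtlety is remembering to handle the case $S\cap T=\emptyset$ separately (since $m$ is defined only on nonempty subsets), and to invoke submodularity in the correct direction.
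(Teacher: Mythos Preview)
Your argument is correct and is essentially the same as the paper's: both use $|S\cap T|\le m\cdot\rk(S\cap T)$ (from the defining maximality of $m$) together with submodularity of rank to get $|S\cup T|\ge m\cdot\rk(S\cup T)$, and then conclude by the trivial upper bound. Your case split on $S\cap T=\emptyset$ is slightly more careful than the paper, which simply observes that $|U|\le m\cdot\rk U$ for $U$ of nonzero rank and tacitly uses that the inequality is an equality $0=0$ when $U=\emptyset$.
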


\begin{proof}
For any set $U\subset E$ of nonzero rank, we have $|U| \leq m \rk U$.  Applying this inequality to $U = S \cap T$, we find that
\begin{eqnarray*}|S\cup T| &=& |S| + |T| - |S\cap T|\\ &\geq& m \rk S + m\rk T - m \rk S\cap T\\
&=& m (\rk S + \rk T - \rk S\cap T)\\
&\geq& m \rk S\cup T.
\end{eqnarray*}
Applying it next to $U = S\cup T$, we find that $|S\cup T| = m \rk S\cup T$, thus $S\cup T \in \cT_0$.
\end{proof}

Let 
$$c=c(M) := \min\{t \mid \text{there exists $w\in t P(M)$ with $w_e \geq 1$ for all $e\in E$}\},$$
and let $w$ be an element of $c P(M)$ with $w_e \geq 1$ for all $e\in E$ (a witness for $c$).

\begin{prop}\label{cm}
We have $c=m$.
\end{prop}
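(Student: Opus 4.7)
The plan is to prove the two inequalities $c \geq m$ and $c \leq m$ separately, with the second carrying all the content. The lower bound is immediate: pick any $S \in \cT_0$ and any witness $w$ for $c$. Since $w_e \geq 1$ everywhere, we have $\varphi_S(w) \geq |S|$, while $w \in c P(M)$ forces $\varphi_S(w) \leq c\rk S$; dividing gives $m = |S|/\rk S \leq c$.

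For the reverse inequality $c \leq m$, my strategy is to induct on $|E|$, using Lemma \ref{max} to select the maximal element $S_0$ of $\cT_0$. The base of the induction occurs when $S_0 = E$: then the all-ones vector lies in $m P(M)$, since $\varphi_E(\mathbf{1}) = |E| = m \rk E$ (by $E \in \cT_0$) and $\varphi_T(\mathbf{1}) = |T| \leq m \rk T$ for every $T \subset E$ by definition of $m$. In the inductive step $S_0 \subsetneq E$, I would pass to the contraction $M/S_0$ on the ground set $E \setminus S_0$ and verify two facts. First, $M/S_0$ is loopless: a loop $e \in E \setminus S_0$ would give $\rk(S_0 \cup \{e\}) = \rk S_0$ and therefore $|S_0 \cup \{e\}|/\rk(S_0 \cup \{e\}) = (|S_0|+1)/\rk S_0 > m$, contradicting the definition of $m$. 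Second, $m(M/S_0) \leq m$: for any $T \subset E \setminus S_0$ with $\rk_{M/S_0}(T) > 0$, the bound $|T \cup S_0| \leq m \rk(T \cup S_0)$ combined with $|S_0| = m \rk S_0$ yields $|T| \leq m(\rk(T \cup S_0) - \rk S_0) = m \rk_{M/S_0}(T)$.

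Applying the inductive hypothesis to $M/S_0$ produces $w' \in m P(M/S_0)$ with $w'_e \geq 1$ throughout. The plan is then to extend $w'$ to $w \in \R^E$ by setting $w_e = 1$ on $S_0$ and $w_e = w'_e$ on the complement, and to check $w \in m P(M)$. The identity $\varphi_E(w) = m \rk S_0 + m \rk_{M/S_0}(E \setminus S_0) = m \rk E$ is routine. For the inequality $\varphi_T(w) \leq m \rk T$, I would split $T = T_1 \sqcup T_2$ with $T_1 = T \cap S_0$ and $T_2 = T \setminus S_0$, obtaining $\varphi_T(w) \leq |T_1| + m \rk_{M/S_0}(T_2)$; the submodularity bound $\rk T + \rk S_0 \geq \rk(T_2 \cup S_0) + \rk T_1$ together with $|T_1| \leq m \rk T_1$ then closes the estimate. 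The main obstacle, and the reason Lemma \ref{max} is stated, is ensuring the maximality of $S_0$: without it, loops could appear in the contraction $M/S_0$ and inflate the density parameter $m(M/S_0)$, derailing the induction.
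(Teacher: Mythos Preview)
Your argument is correct and takes a genuinely different route from the paper's. The paper constructs a witness $w\in mP(M)$ by a greedy iterative process: start with the all-ones vector $w(0)$; at each stage let $\cT_k$ be the collection of sets $S$ with $\varphi_S(w(k))=m\rk S$, take its maximal element $T_k$ (via an argument identical to Lemma~\ref{max}), pick $e_k\notin T_k$, and push the $e_k$-coordinate up as far as the constraints allow; stop once $T_k=E$. Your approach instead inducts on $|E|$, contracting the maximal element $S_0\in\cT_0$ and gluing a witness for $M/S_0$ to the all-ones vector on $S_0$. Both proofs lean on Lemma~\ref{max} in the same way, but yours replaces an indefinite sequence of coordinate bumps with a single matroid contraction, and it makes the structural reasons (looplessness and bounded density of $M/S_0$) explicit; the paper's version is perhaps more hands-on but less conceptual.

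One point to tighten: the inductive hypothesis literally gives a witness $w''\in m(M/S_0)\,P(M/S_0)$, not in $m\,P(M/S_0)$, and in fact $m(M/S_0)<m$ strictly (otherwise the set achieving it, unioned with $S_0$, would contradict the maximality of $S_0$ in $\cT_0$). To land in $m\,P(M/S_0)$ you can add to $w''$ any element of $(m-m(M/S_0))P(M/S_0)$, for instance $(m-m(M/S_0))$ times the indicator of a basis; alternatively, strengthen the inductive statement to ``for every $t\ge m(M)$ there exists $w\in tP(M)$ with $w_e\ge 1$'', which is what you actually use.
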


\begin{proof}
Let $T_0$ be the maximal element of $\cT_0$ (which exists by Lemma \ref{max}).  Since $w_e\geq 1$ for all $e\in E$ and $w\in c P(M)$, we have $|T_0| \leq \varphi_{T_0}(w) \leq c \rk T_0$, and therefore $c \geq |T_0| / \rk T_0 = m$.

Now we must prove the opposite inequality.  
We will do it by constructing an element $w\in m P(M)$ with $w_e \geq 1$ for all $e\in E$.  This construction will proceed in stages.

First let $w(0) = (1,\ldots,1)$.  We then have $\varphi_S(w(0)) = |S| \leq m \rk S$ for all $S\subset E$, 
with equality if and only if $S\in \cT_0$.  In particular, we 
do not necessarily have $\varphi_E(w(0)) = m \rk E$.  If $T_0 = E$, then $\varphi_E(w(0)) = m \rk E$, so $w(0)\in m P(M)$ and we are done.
If not, choose $e_0\in E\setminus T_0$.  Then we have $e_0\notin S$ for all $S\in \cT_0$.  That means that there exists $\epsilon > 0$
such that $\varphi_S(w(0) + \epsilon x_e) \leq m \rk S$ for all $S\subset E$.  Choose the largest such $\epsilon$, and let
$w(1) = w(0) + \epsilon x_{e_0}$.

Now let $\cT_1$ be the collection of subsets $S\subset E$ with the property that $\varphi_S(w(1)) = m \rk S$.
By an argument identical to that of Lemma \ref{max}, there is a maximal element $T_1$ of $\cT_1$.  If $T_1 = E$,
then $w(1) \in m P(M)$ and we are done.  If not, choose $e_1 \in E \setminus T_1$, and repeat the procedure to produce a new vector $w(2)$.

At some point we will have $T_k = E$, and this process will terminate with $w(k) \in m P(M)$ and $w(k)_e \geq 1$ for all $e\in E$.
\end{proof}


\begin{prop}\label{delete}
Given a loopless matroid $M$ and any $T\in \TT_0$, one has
$$c(M) = c(M|_T) = |T| / \rk T.$$
In particular, this is true for the maximal element $T_0\in \cT_0$.
\end{prop}

\begin{proof}
Since the rank function on $M|_T$ is the same as that on $M$,
it is clear that 
$$m(M) = m(M|_T) = |T| / \rk T.$$
It remains to apply Proposition \ref{cm}.
\end{proof}

\begin{cor}\label{main-cor}
For any connected graph $\Ga$ without bridges, there exists a graph $\ov{\Ga}$ obtained by contracting some edges in $\Ga$, such that
$$c(\Ga)=c(\ov{\Ga})=\frac{e(\ov{\Ga})}{b(\ov{\Ga})}$$
(the second equality means that $\ov{\Ga}$ is optimal).
\end{cor}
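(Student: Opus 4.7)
The plan is to translate the graph-theoretic constant $c(\Ga)$ into the matroid constant $c(M^*(\Ga))$ of Proposition \ref{cm}, apply Proposition \ref{delete} to the cographical matroid, and then translate the resulting subset back to a contraction in $\Ga$.

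First I would identify $c(\Ga)$ with $c(M^*(\Ga))$. Because $\Ga$ has no bridges, the cographical matroid $M^*(\Ga)$ is loopless (loops in $M^*(\Ga)$ correspond to bridges of $\Ga$), so Proposition \ref{delete} will apply. The bases of $M^*(\Ga)$ are precisely the complements of spanning trees of $\Ga$, and their indicator vectors are exactly the $v_T$. By Edmonds' matroid-polytope theorem, the half-space description of $P(M^*(\Ga))$ used in the paper coincides with the convex hull of the $v_T$; that is, $w\in tP(M^*(\Ga))$ if and only if $w=\sum_T c_T v_T$ for some $c_T\ge 0$ with $\sum_T c_T=t$. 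Hence
\[
c(M^*(\Ga))=\inf\left\{\sum_T c_T\ \big{|}\ c_T\ge 0,\ \sum_T c_T v_T\ge v\right\}=c(\Ga).
\]

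Next I would apply Proposition \ref{delete} to $M^*(\Ga)$ to obtain a subset $U\sub E(\Ga)$ satisfying
\[
c(M^*(\Ga))=c(M^*(\Ga)|_U)=|U|/\rk_{M^*(\Ga)}(U).
\]
By standard matroid duality, deletion of $E(\Ga)\setminus U$ in the cographical matroid equals the dual of contraction of those edges in the graphical matroid, so $M^*(\Ga)|_U=M^*(\ov\Ga)$, where $\ov\Ga:=\Ga/(E(\Ga)\setminus U)$ is obtained from $\Ga$ by contracting all edges outside $U$. The graph $\ov\Ga$ is still connected (contraction preserves connectedness), and since $M^*(\ov\Ga)=M^*(\Ga)|_U$ is loopless, $\ov\Ga$ has no bridges. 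Hence the first step applies to $\ov\Ga$ and gives $c(\ov\Ga)=c(M^*(\ov\Ga))$.

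Finally, using $|U|=e(\ov\Ga)$ and $\rk_{M^*(\Ga)}(U)=\rk M^*(\ov\Ga)=b(\ov\Ga)$, I would chain the equalities
\[
c(\Ga)=c(M^*(\Ga))=c(M^*(\Ga)|_U)=c(M^*(\ov\Ga))=c(\ov\Ga)=\frac{e(\ov\Ga)}{b(\ov\Ga)},
\]
which is the statement of the corollary. The most delicate step is the very first identification $c(\Ga)=c(M^*(\Ga))$, which requires Edmonds' theorem equating the two standard descriptions of the matroid polytope; after that, the proof is an exercise in the matroid--graph dictionary (deletion in $M^*$ equals contraction in $\Ga$, loops in $M^*$ equal bridges in $\Ga$).
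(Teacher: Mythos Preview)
Your proposal is correct and follows essentially the same route as the paper: identify $c(\Ga)$ with $c(M)$ for the cographical matroid, apply Proposition~\ref{delete} to get a restriction $M|_U$, and translate deletion in $M^*(\Ga)$ back to contraction in $\Ga$. The paper's proof is much terser and simply asserts $c(M)=c(\Ga)$ without justification, whereas you correctly pinpoint that this identification rests on Edmonds' theorem that the base polytope $P(M)$ is the convex hull of the indicator vectors of bases; your extra care in verifying that $M^*(\Ga)|_U$ remains loopless and that $\rk_{M^*(\Ga)}(U)=b(\ov\Ga)$ is also well placed.
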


\begin{proof}
Let $M$ denote the cographical matroid associated with $\Gamma$.
Then $c(M)=c(\Gamma)$.  
The property that $\Gamma$ is optimal is the property that $c(M) = |E| / \rk E$.  Corollary \ref{delete} says that we can find an optimal deletion of $M$
with the same constant $c$.  That means that we can find an optimal contraction of $\Gamma$ with the same constant $c$.
\end{proof}

\begin{example}
Let $\Ga$ be an $2n$-gon with every other side doubled, so $e(\Ga)=3n$.
It is easy to see that $c(\Ga)=n$. We can collapse all the doubled sides to get the $n$-gon $\ov{\Ga}$, which is optimal and has
$c(\ov{\Ga})=n$. 
\end{example}

\subsection{Proof of Theorem A}

Lemma \ref{asympt-lem} shows that the case of the integral $I_{\Ga}(\varphi,s)$ (for stable $\Ga$ and all components of $C_0$ rational)
reduces to the case of $J_{\Ga}(s)$. Also, since $\psi_{\Ga}=\psi_{\Ga'}$, where $\Ga'$ is obtained by contracting all the bridges, it is enough to consider the
case when $\Ga$ has no bridges.

Due to Lemma \ref{easy-div-lem}(i), it remains to prove that $J_\Ga(s)$ diverges for $s=c(\Ga)$. 
Let $S\sub E(\Ga)$ denote the set of edges that get contracted to get $\ov{\Ga}$.
By Fubini theorem, it is enough to prove that for any fixed values $z_i=c_i$ with $i\in S$, the integral 
$$\int_{B'} \frac{\prod_{e\not\in S} dz_ed\ov{z}_e}{\psi_\Ga(\ln |z_\bullet|)|_{z_i=c_i, i\in S}^{c(\Ga)}\cdot \prod_{e\not\in S} z_e\ov{z}_e},$$
where $B'$ is a small ball around the origin in $\C^{E(\Ga)\setminus S}$, diverges.
Thus, by Lemma \ref{easy-div-lem}(ii) applied to $\ov{\Ga}$, it is enough to prove an inequality 
$$\psi_\Ga(x_\bullet)|_{x_e=a_e, e\in S}\le C\cdot \psi_{\ov{\Ga}}(x_\bullet),$$
for $a_e>0$, with some constant $C>0$ depending on $(a_e)$. Furthermore, it is enough that this inequality
holds for $x_e>C$.
Now we observe that for every spanning forest $T\sub \Ga$, the intersection $\ov{T}:=T\cap \ov{\Ga}$ is also a spanning forest.
This implies that for every monomial $x_T=\prod_{\not\in T} x_e$ of $\psi_{\Ga}$, one has 
$$x_T|_{x_e=a_e, e\in S}=\prod_{e\in S} a_e\cdot x_{\ov{T}}.$$
This easily leads to the claimed inequality.

\end{document}